 \newtheorem{thm}{Theorem}[section]
 \newtheorem{cor}[thm]{Corollary}
 \newtheorem{lem}[thm]{Lemma}
 \theoremstyle{definition}
 \theoremstyle{remark}
 \newtheorem{rem}[thm]{Remark}
\newcommand{\A}{\mathcal{A}}
\newcommand{\U}{\mathcal{U}}
\newcommand{\X}{\mathcal{X}}
\begin{document}

\title[2n-Weak module amenability of semigroup algebras]
 {2n-Weak module amenability of semigroup algebras}

\author{ Hoger Ghahramani}

\thanks{{\scriptsize
\hskip -0.4 true cm \emph{MSC(2010)}:  43A20; ; 46H25; 43A10.
\newline \emph{Keywords}: 2n-weak module amenability; Inverse semigroup; Semigroup algebra; Banach module; Module derivation.\\}}

\address{Department of
Mathematics, University of Kurdistan, P. O. Box 416, Sanandaj,
Iran.}

\email{h.ghahramani@uok.ac.ir; hoger.ghahramani@yahoo.com}

\address{}

\email{}

\thanks{}

\thanks{}

\subjclass{}

\keywords{}

\date{}

\dedicatory{}

\commby{}


\begin{abstract}
Let $S$ be an inverse semigroup with the set of idempotents $E$.
We prove that the semigroup algebra $\ell^{1}(S)$ is always
$2n$-weakly module amenable as an $\ell^{1}(E)$-module, for any
$n\in \mathbb{N}$, where $E$ acts on $S$ trivially from the left
and by multiplication from the right.
\end{abstract}

\maketitle

\section{Introduction}
Let $\A$ be a Banach algebra, and let $\X$ be a Banach
$\A$-bimodule. A linear map $D:\A\rightarrow \X$ is called a
\emph{derivation} if $D(ab)=aD(b)+D(a)b$ for all $a,b\in \A$. Each
map of the form $a\rightarrow ax-xa$, where $x\in\X$, is a
continuous derivation which will be called an \emph{inner
derivation}.
\par
For any Banach $\A$-module $\X$, its dual space $\X^{*}$ is
naturally equipped with a Banach $\A$-module structure via
\[ \langle x,af\rangle=\langle xa,f\rangle \quad \langle x,
fa\rangle=\langle ax, f\rangle \,\,\, (a\in \A,f\in \X^{*}, x\in
\X). \] Note that the Banach algebra $\A$ itself is a Banach
$\A$-bimodule under the algebra multiplication. So $\A^{(n)}$, the
$n$-th dual space of $\A$, is naturally a Banach $\A$-bimodule in
the above sense for each $n\in \mathbb{N}$. The Banach algebra
$\A$ is called \emph{n-weakly amenable} if every continuous
derivation from $\A$ into $\A^{(n)}$ is inner. If $\A$ is
$n$-weakly amenable for each $n\in\mathbb{N}$ then it is called
\emph{permanently weakly amenable}.
\par
The concept of $n$-weakly amenability was introduced by Dales,
Ghahramani and Gr\o nb\ae k in \cite{dal}. Johnson showed in
\cite{john1} that for any locally compact group $G$, the group
algebra $L^{1}(G)$ is always $1$-weakly amenable. It was shown
further in \cite{dal} that $L^{1}(G)$ is in fact $n$-weakly
amenable for all odd numbers $n$. Whether this is still true for
even numbers $n$ was left open in \cite{dal}. Later in
\cite{john2} Johnson proved that $\ell^{1}(G)$ is $2n$-weakly
amenable for each $n\in \mathbb{N}$ whenever $G$ is a free group.
The problem has been resolved affirmatively for general locally
compact group $G$ in \cite{choi} and in \cite{los} independently,
using a theory established in \cite{los2}. In \cite{zh}, as an
application of a common fixed point property for semigroups, a
short proof to $2m$-weak amenability of $L^{1}(G)$ was presented.
Mewomo in \cite{mew} investigate the $n$-weak amenability of
semigroup algebras and showed that for a Rees matrix semigroup
$S$, $\ell^{1}(S)$ is $n$-weakly amenable when $n$ is odd. Also
he obtained a similar result for a regular semigroup $S$ with
finitely many idempotents.
\par
Let $\A$ and $\U$ be Banach algebras such that $\A$ is a Banach
$\U$-bimodule with compatible actions, that is
\[ \alpha .(ab)=(\alpha .a)b, \quad (ab).\alpha =a(b.\alpha )
\,\,\, (a,b\in \A, \alpha \in \U). \] Let $\X$ be a Banach
$\A$-bimodule and a Banach $\U$-bimodule with compatible actions,
that is
\[ \alpha . (ax)=(\alpha .a)x, \quad a(\alpha .x)=(a.\alpha
)x, \quad (\alpha . x)a=\alpha .(xa) \,\,\, (a\in \A, \alpha \in
\U, x\in \X), \] and similarly for the right or two-sided
actions. Then $\X$ is called a \emph{Banach $\A$-$\U$-module}, and
is called a \emph{commutative} Banach $\A$-$\U$-module whenever
$\alpha . x =x.\alpha $ for all $\alpha \in \U$ and $x\in \X$.
\par
Let $\A$ and $\U$ be as above and $\X$ be a Banach
$\A$-$\U$-module. A bounded map $D:\A\rightarrow \X$ is called a
\emph{module derivation} if
\[ D(a \pm b)=D(a) \pm D(b), \quad D(ab)=aD(b)+D(a)b \,\,\, (a,b\in \A),\]
and
\[ D(\alpha .a)=\alpha .D(a), \quad D(a. \alpha )=D(a). \alpha
\,\,\, ( a\in \A, \alpha \in \U). \] Note that $D$ is not
necessarily linear and if there exists a constant $M > 0$ such
that $\parallel D(a) \parallel\leq M\parallel a \parallel $, for
each $a\in \A$, then $D$ is bounded and its boundedness implies
its norm continuity. When $\X$ is a commutative Banach
$\A$-$\U$-module, each $x\in \X$ defines an $\U$-module derivation
\[ D_{x}(a)=ax-xa \,\,\, (a\in \A), \]
these are called \emph{inner} module derivations.
\par
If $\X$ is a (commutative) Banach $\A$-$\U$-module, then so is
$\X^{*}$, where the actions of $\A$ and $\U$ on $\X^{*}$ are
naturally defined as above. So by letting $\X^{(0)}=\X$, if we
define $\X^{(n)}$ ($n\in \mathbb{N}$) inductively by
$\X^{(n)}=(\X^{(n-1)})^{*}$, then $\X^{(n)}$ is a (commutative)
Banach $\A$-$\U$-module.
\par
Note that when $\A$ acts on itself by algebra multiplication, it
is not in general a Banach $\A$-$\U$-module, as we have not
assumed the compatibility condition $ a( \alpha . b)=(a.\alpha)b
\,\,\, ( a,b\in \A, \alpha \in \U) $. If we consider the closed
ideal $J$ of $\A$ generated by elements of the form
$(a.\alpha)b-a( \alpha . b)$ for $a,b\in \A, \alpha \in \U$, then
$J$ is an $\U$-submodule of $\A$. So the quotient Banach algebra
$\A / J$ is a Banach $\U$-module with compatible actions and
hence from definition of $J$, when $\A / J$ acts on itself by
algebra multiplication, it is a Banach $(\A / J) $-$\U$-module.
Therefore, $(\A / J) ^{(n)}$ ($n\in \mathbb{N}$) is a Banach $(\A
/ J )$-$\U$-module. In general $\A / J$ is not a commutative
$\U$-module. If $\A / J$ is a commutative $\U$-module, then $(\A
/ J)^{(n)}$ ($n\geq 0$) is a commutative Banach $(\A / J)
$-$\U$-module. Now it is clear when $\A$ is a commutative
$\U$-module, then $J=\{ 0 \}$ and hence by multiplication of $\A$
from both sides, $\A^{(n)}$ ($n\geq 0$) is a commutative Banach
$\A$-$\U$-module.
\par
Let the Banach algebra $\A$ be a Banach $\U$-module with
compatible actions. From the above observations, $(\A / J)^{(n)}$
($n\geq 0$) is a Banach $\A$-$\U$-module by the $\A$-module
actions $a\Phi=(a+J)\Phi$ and $\Phi  a=\Phi (a+J)$ for $a,b\in
\A, \Phi \in (\A / J)^{(n)}$ (the $\U$-module actions are similar
to actions on $(\A / J)^{(n)}$ as $\U$-module). Note that
whenever $\A / J$ is a commutative $\U$-module, then $(\A /
J)^{(n)}$ ($n\geq 0$) is a commutative Banach $\A$-$\U$-module by
the above actions. Now we are ready to define the notion of
$n$-weak module amenability. We say that $\A$ is \emph{n-weakly
module amenable} ($n\in \mathbb{N}$) if $(\A / J)^{(n)}$ is a
commutative Banach $\A$-$\U$-module, and each continuous module
derivation $D:\A\rightarrow (\A / J)^{(n)}$ is inner; that is
$D(a)=D_{\Phi}(a)=a\Phi -\Phi a$ for some $\Phi\in (\A /
J)^{(n)}$ and all $a\in \A$. Also $\A$ is called
\emph{permanently weakly module amenable} if $\A$ is $n$-weakly
module amenable for each $n\in \mathbb{N}$. This definition is
quite natural since $(\A / J)^{(n)}$ ($n\geq 0$) is always a
Banach $\A$-$\U$-module.
\par
The notion of weak module amenability of a Banach algebra $\A$
which is a Banach $\U$-module with compatible actions is defined
in \cite{am1} and studied in \cite{am2}. The main result of
\cite{am1} is that $\ell^{1}(S)$ is weakly module amenable, as an
$\ell^{1}(E)$-module, when $S$ is commutative. The definition of
weak module amenability is modified in \cite{am2} and the above
result is proved for an arbitrary inverse semigroup (with trivial
left action). Then the notion of $n$-weak module amenability is
introduced in \cite{bod} and proved that $\ell^{1}(S)$ is
$(2n+1)$-weakly module amenable as an $\ell^{1}(E)$-module, for
each $n\in \mathbb{N}$, where $S$ is an inverse semigroup with
the set of idempotents $E$.
\par
In this paper we show that the inverse semigroup algebra
$\ell^{1}(S)$ is $2n$-weakly module amenable as an
$\ell^{1}(E)$-module, for every number $n\in \mathbb{N}$, where
$E$ is the set of idempotents of $S$ and $E$ acts on $S$
trivially from the left and by multiplication from the right. Our
proof is based on a common fixed point property for semigroups
and the idea of our proof comes from \cite{zh}. In fact in this
article we show that a module version of the main result of
\cite{zh} holds for inverse semigroups.

\section{Main result}
A discrete semigroup $S$ is called an \emph{inverse semigroup} if
for each $s \in S$ there is a unique element $s^{*} \in S$ such
that $ss^{*}s = s$ and $s^{*}ss^{*} = s^{*}$. An element $e \in S$
is called an \emph{idempotent} if $e = e^{*} = e^{2}$. The set of
idempotents of $S$ is denoted by $E$. There is a natural order on
$E$, defined by
\[ e\leq d \Leftrightarrow ed=e \quad (e,d \in E), \]
and $E$ is a commutative subsemigroup of S, which is also a
semilattice \cite[Theorem V.1.2]{ho}. Elements of the form
$ss^{*}$ are idempotents of S and in fact all elements of $E$ are
in this form.
\par
The algebra $\ell^{1}(E)$ could be regarded as a subalgebra of
$\ell^{1}(S)$. Hence $\ell^{1}(S)$ is a Banach algebra and a
Banach $\ell^{1}(E)$-module with compatible actions. In this
article we let $\ell^{1}(E)$ act on $\ell^{1}(S)$ by
multiplication from right and trivially from left, that is
\[ \delta_{e}.\delta_{s}=\delta_{s}, \quad
\delta_{s} . \delta_{e}=\delta_{se}=\delta_{s}*\delta_{e} \quad
(s\in S, e\in E). \] In this case, the ideal $J$ (see section 1)
is the closed linear span of $\{\delta_{set}-\delta_{st}\, | \,
s,t\in S, e\in E \}$. With the notations of the previous section
$(\ell^{1}(S)/J) ^{(n)}$ ($n\geq 0$) is a Banach
$\ell^{1}(S)$-$\ell^{1}(E)$-module. Note that we show the
$\ell^{1}(E)$-module actions of $f\in \ell^{1}(E)$ on $\Phi \in
(\ell^{1}(S)/J) ^{(n)}$ by $f.\Phi$ and $\Phi .f$, and also denote
the $\ell^{1}(S)$ module actions of $f\in \ell^{1}(S)$ on $\Phi
\in (\ell^{1}(S)/J) ^{(n)}$ by $f\Phi$ and $\Phi f$. In the next
remark we give some properties of these module actions.
\begin{rem}\label{m1}
With the above notation, for all $e\in E$ and $\Phi\in
(\ell^{1}(S)/J) ^{(n)}$ $(n\geq 0)$ we have the followings
\begin{enumerate}
\item[(i)] $\delta_{e}. \Phi = \Phi .\delta_{e} $;
\item[(ii)] $\delta_{e}\Phi=\Phi\delta_{e}=\Phi$.
\end{enumerate}
\end{rem}
\begin{proof}
For all $e,d \in E$, we have
$\delta_{e}-\delta_{d}=\delta_{ee}-\delta_{ede}-\delta_{dd}+\delta_{ded}\in
J $. So $\delta_{e}+J=\delta_{d}+J$. Now for any $s\in S$ and $e
\in E$, we find
\[
\delta_{es}+J=(\delta_{e}+J)(\delta_{s}+J)=(\delta_{ss^{*}}+J)(\delta_{s}+J)=\delta_{s}+J.
\]
Similarly, we get $\delta_{se}+J=\delta_{s}+J$ for $e\in E$ and
$s\in S$. Hence
\[
\delta_{e}.(\delta_{s}+J)=\delta_{s}+J=\delta_{se}+J=(\delta_{s}+J).\delta_{e}
\]
and
\[\delta_{e}(\delta_{s}+J)=(\delta_{e}+J)(\delta_{s}+J)=\delta_{es}+J=\delta_{s}+J=\delta_{se}+J=(\delta_{s}+J)(\delta_{e}+J)=(\delta_{s}+J)\delta_{e},\]
for all $e\in E$ and $s\in S$. Since $lin\{\delta_{s}\, | \, s\in
S \}$ is dense in $\ell^{1}(S)$ and $J$ is closed in
$\ell^{1}(S)$, it follows that
\[\delta_{e}.(f+J)=(f+J).\delta_{e} \]
and
\[\delta_{e}(f+J)=f+J=(f+J)\delta_{e},\]
for all $e\in E$ and $f \in \ell^{1}(S)$. So by induction on $n$
we arrive at
\[\delta_{e}. \Phi = \Phi .\delta_{e} \]
and
\[ \delta_{e}\Phi=\Phi\delta_{e}=\Phi,\]
for all $e\in E$ and $\Phi\in (\ell^{1}(S)/J) ^{(n)}$ $(n\geq 0)$.
\end{proof}
In view of this remark (i), we find that $(\ell^{1}(S)/J) ^{(n)}$
$(n\geq 0)$ is a commutative $\ell^{1}(E)$-module.
\par
For an inverse semigroup $S$, the quotient $S / \approx $ is a
discrete group, where $\approx$ is an equivalence relation on $S$
as follows:
\[ s\approx t \Leftrightarrow \delta_{s}-\delta_{t}\in J \quad
(s,t\in S). \] Indeed, $S/ \approx $ is homomorphic to the maximal
group homomorphic image $G_{S}$ \cite{mun} of $S$ (see \cite{am3},
\cite{pou} and \cite{pou2}). As in \cite[Theorem 3.3] {rez}, we
may observe that $\ell^{1}(S)/J\cong \ell^{1}(G_{S})$. Also see
\cite{eb}.
\par
Since for proof of the main result we use a common fixed point
property for semigroups, now we recall some notions related to
common fixed point theory. Let $S$ be a (discrete) semigroup. The
space of all bounded complex valued functions on $S$ is denoted by
$\ell^{\infty}(S)$. It is a Banach space with the uniform
supremum norm. In fact $\ell^{\infty}(S) = (\ell^{1}(S))^{*}$.
For each $s \in S$ and each $f\in \ell^{\infty}(S)$ let
$\ell_{s}f$ be the left translate of $f$ by $s$, that is
$\ell_{s}f(t) = f(st)$ ($t \in S$) (the right translate $r_{s}f$
is defined similarly). We recall that $f \in \ell^{\infty}(S)$ is
\emph{weakly almost periodic} if its left orbit
$\mathcal{L}\mathcal{O}(f) = \{\ell_{s}f \, | \,  s \in S\}$ is
relatively compact in the weak topology of $\ell^{\infty}(S)$. We
denote by $WAP(S)$ the space of all weakly almost periodic
functions on $S$, which is a closed subspace of $\ell^{\infty}(S)$
containing the constant function and invariant under the left and
right translations. A linear functional $m \in WAP(S)^{*}$ is a
\emph{mean} on $WAP(S)$ if $\parallel m
\parallel = m(1) = 1$. A mean $m$ on $WAP(S)$ is a \emph{left
invariant mean} (abbreviated $LIM$) if $m(\ell_{s}f) = m(f)$ for
all $s\in S$ and all $f \in WAP(S)$. If $S$ is an inverse
semigroup, it is well known that $WAP(S)$ always has a $LIM$
\cite[Proposition 2] {dun}. Let $C$ be a subset of a Banach space
$\X$. We say that $\Gamma = \{ T_{s}\, | \, s \in S\}$ is a
\emph{representation} of $S$ on $C$ if for each $s \in S$, $T_{s}$
is a mapping from $C$ into $C$ and $T_{st}(x)=T_{s}(T_{t}(x))$
$(s, t \in S, x \in C)$. We say that $x\in C$ is a \emph{common
fixed point} for (the representation of) $S$ if $T_{s}(x) = x$ for
all $s \in S$.
\par
Let $\X$ be a Banach space and $C$ a nonempty subset of $\X$. A
mapping $T: C \rightarrow C$ is called \emph{nonexpansive} if
$\parallel T(x)- T(y)\parallel \leq \parallel x -y \parallel$ for
all $x, y \in C$. The mapping $T$ is called \emph{affine} if $C$
is convex and $T(\gamma x +\eta y)=\gamma T(x)+ \eta T(y)$ for
all constants $\gamma , \eta \geq 0$ with $\gamma + \eta = 1$ and
$x, y \in C$. A representation $\Gamma$ of a semigroup $S$ on
$C$ acts as nonexpansive affine mappings, if each $T_{s}$ $(s\in
S)$ is nonexpansive and affine.
\par
A Banach space $\X$ is called \emph{$L$-embedded} if there is a
closed subspace $\X_{0}\subseteq \X^{**}$ such that $\X^{**} = \X
\oplus_{\ell^{1}} \X_{0}$. The class of $L$-embedded Banach spaces
includes all $L^{1}(\Sigma, \mu)$ (the space of of all absolutely
integrable functions on a measure space $(\Sigma, \mu)$), preduals
of von Neumann algebras, dual spaces of $M$-embedded Banach spaces
and the Hardy space $H_{1}$. In particular, given a locally
compact group $G$, the space $L^{1}(G)$ is $L$-embedded. So are
its even duals $L^{1}(G)^{(2n)}$ $(n\geq 0)$. For more details we
refer the reader to \cite{zh} and the references therein. \par
The next lemma is the common fixed point theorem for semigroups in
\cite[Theorem 2]{zh}, which will be used in our proof to the main
result.
\begin{lem}\label{fix}
Let $S$ be a discrete semigroup and $\Gamma$ a representation of
$S$ on an $L$-embedded Banach space $\X$ as nonexpansive affine
mappings. Suppose that $WAP(S)$ has a $LIM$ and suppose that there
is a nonempty bounded set $B\subset \X$ such that $B \subseteq
\overline{ T_{s}(B)}$ for all $s\in S$, then $\X$ contains a
common fixed point for $S$.
\end{lem}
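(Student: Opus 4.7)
The plan is to construct a common fixed point by averaging a single orbit against the $LIM$ on $WAP(S)$, and then using the $L$-embedding decomposition $\X^{**}=\X \oplus_{\ell^{1}} \X_{0}$ to pull the averaged point, which a priori lies in $\X^{**}$, back into $\X$. First, I would fix any $b_{0}\in B$ and note that, since each $T_{s}$ is nonexpansive and $B$ is bounded, the full orbit $\mathcal{O}(b_{0})=\{T_{s}b_{0}:s\in S\}$ is norm-bounded in $\X$. For every $\phi\in \X^{*}$, set $f_{\phi}(s)=\phi(T_{s}b_{0})$; the first technical step is to verify that $f_{\phi}\in WAP(S)$. This is where the $L$-embedded structure enters, since in such spaces bounded sets enjoy strong weak-compactness properties modulo the $\ell^{1}$-summand $\X_{0}$, which (together with nonexpansiveness of the $T_{s}$) translates into relative weak compactness of the left orbit of $f_{\phi}$ in $\ell^{\infty}(S)$.

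Second, given a $LIM$ $m$ on $WAP(S)$, I would define $\Phi\in \X^{**}$ by $\Phi(\phi)=m(f_{\phi})$. Left invariance of $m$ together with affineness of the maps $T_{s}$ forces $\Phi$ to be invariant under the canonical bidual extensions $T_{s}^{**}$ of each $T_{s}$. At this stage $\Phi$ is an invariant element in $\X^{**}$, but not yet in $\X$; the whole difficulty is to correct this.

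Third, I would decompose $\Phi=x+y$ along $\X^{**}=\X\oplus_{\ell^{1}}\X_{0}$. The key claim is that $x$ itself is a common fixed point for $\Gamma$ in $\X$. Here the assumption $B\subseteq \overline{T_{s}(B)}$ for every $s\in S$ is essential: combined with nonexpansiveness (which extends to the bidual as a contractive property on the $\ell^{1}$-summand) it prevents any leakage of $\Phi$ into $\X_{0}$ from accumulating under the action, so the projection onto $\X$ commutes with each $T_{s}^{**}$ in the relevant sense. Applying the projection to the identity $T_{s}^{**}\Phi=\Phi$ then yields $T_{s}(x)=x$ for all $s\in S$.

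The main obstacle I expect is exactly the third step: producing an actual fixed point in $\X$, rather than merely an invariant functional in $\X^{**}$. Checking that the bidual extensions $T_{s}^{**}$ respect the $\ell^{1}$-splitting is delicate and is what makes $L$-embeddedness the right hypothesis; the role of the set $B$ with $B\subseteq \overline{T_{s}(B)}$ is to anchor the averaged point to $\X$ so that no mass escapes into $\X_{0}$. By contrast, the verification that $f_{\phi}\in WAP(S)$ and the construction of the invariant $\Phi$ via the $LIM$ are essentially standard once the framework is set up.
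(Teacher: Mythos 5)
First, note that the paper does not prove this lemma at all: it is quoted as Theorem~2 of \cite{zh} and used as a black box, so there is no internal proof to compare against. Judged on its own terms, your proposal has a genuine gap at its first step, and the later steps inherit it. You set $f_{\phi}(s)=\phi(T_{s}b_{0})$ and assert that $f_{\phi}\in WAP(S)$ because bounded sets in $L$-embedded spaces ``enjoy strong weak-compactness properties modulo $\X_{0}$''. This is not true in the relevant sense: $\ell^{1}$ is the prototypical $L$-embedded space, and its bounded sets are as far from relatively weakly compact as possible. Having $f_{\phi}\in WAP(S)$ for every $\phi\in\X^{*}$ essentially amounts, via Grothendieck's double-limit criterion, to relative weak compactness of the orbit $\{T_{s}b_{0}\,|\,s\in S\}$ in $\X$, and a bounded orbit in an $L$-embedded space need not be anywhere near weakly compact --- this is precisely the obstruction the $L$-embedded fixed point theory is designed to circumvent, not a consequence of it. Without $f_{\phi}\in WAP(S)$ the $LIM$ cannot be applied, so the invariant $\Phi\in\X^{**}$ of your second step is never constructed. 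Your third step has an independent problem: to project the identity $T_{s}^{**}\Phi=\Phi$ onto the $\X$-summand you need $PT_{s}^{**}=T_{s}P$, i.e.\ that the linear part of $T_{s}^{**}$ maps $\X_{0}$ into $\X_{0}$; this fails for general nonexpansive linear maps and you give no argument for it. Finally, the hypothesis $B\subseteq\overline{T_{s}(B)}$ --- without which the statement is false --- is never actually used; invoking it to ``prevent leakage into $\X_{0}$'' is not a proof step.

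The proof in \cite{zh} (following the Chebyshev-centre method of Bader, Gelander and Monod) runs in the opposite order, and it is worth seeing why. Put $h(\xi)=\sup_{b\in B}\Vert\xi-b\Vert$ for $\xi\in\X^{**}$. Since $\Vert x+x_{0}\Vert=\Vert x\Vert+\Vert x_{0}\Vert$ for $x\in\X$ and $x_{0}\in\X_{0}$, one gets $h(x+x_{0})=h(x)+\Vert x_{0}\Vert$, so the set $C$ of minimizers of $h$ (nonempty, convex and weak$^{*}$ compact, since $h$ is weak$^{*}$ lower semicontinuous and coercive) lies entirely inside $\X$ and is therefore weakly compact in $\X$. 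Nonexpansiveness together with $B\subseteq\overline{T_{s}(B)}$ gives $h(T_{s}^{**}\xi)\leq\sup_{b\in B}\Vert T_{s}^{**}\xi-T_{s}(b)\Vert\leq h(\xi)$, so $C$ is $\Gamma$-invariant; this is where that hypothesis genuinely earns its keep. Only then does the $LIM$ on $WAP(S)$ enter, via the Lau--Zhang fixed point theorem for representations by nonexpansive affine maps on a weakly compact convex set. In short, the $L$-embedding is used to manufacture a weakly compact invariant subset of $\X$ on which the $WAP$ machinery applies, rather than to pull an invariant element of $\X^{**}$ back into $\X$.
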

We now can prove the main result of the paper.
\begin{thm}
Let $S$ be an inverse semigroup with the set of idempotents $E$.
Consider $\ell^{1}(S)$ as a Banach module over $\ell^{1}(E)$ with
the trivial left action and natural right action. Then the
semigroup algebra $\ell^{1}(S)$ is $2n$-weakly module amenable as
an $\ell^{1}(E)$-module for each $n \in \mathbb{N}$.
\end{thm}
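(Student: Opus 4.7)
The plan is to adapt the fixed-point strategy of \cite{zh} to the module setting by constructing a representation $\Gamma=\{T_{s}:s\in S\}$ of $S$ on $\X:=(\ell^{1}(S)/J)^{(2n)}$ whose common fixed points encode exactly the implementing elements for a given module derivation, and then invoking Lemma~\ref{fix}. Given a bounded module derivation $D:\ell^{1}(S)\to\X$, I set
\[
T_{s}(\Phi):=\delta_{s}\Phi\delta_{s^{*}}-D(\delta_{s})\delta_{s^{*}}\qquad(s\in S,\ \Phi\in\X).
\]
Each $T_{s}$ is affine and nonexpansive, since $\|\delta_{s}\|=\|\delta_{s^{*}}\|=1$. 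Once a common fixed point $\Phi$ is produced, multiplying $T_{s}(\Phi)=\Phi$ on the right by $\delta_{s}$ and collapsing the resulting $\delta_{s^{*}s}$ via Remark~\ref{m1}(ii) (using $s^{*}s\in E$) gives $D(\delta_{s})=\delta_{s}\Phi-\Phi\delta_{s}$; linearity and continuity then deliver $D=D_{\Phi}$ on all of $\ell^{1}(S)$.

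The two structural claims to verify are (a) $T_{st}=T_{s}\circ T_{t}$ and (b) $T_{e}=\operatorname{id}_{\X}$ for every $e\in E$. For (a), expanding both sides using $D(\delta_{st})=\delta_{s}D(\delta_{t})+D(\delta_{s})\delta_{t}$ reduces matters to the single identity $D(\delta_{s})\delta_{tt^{*}s^{*}}=D(\delta_{s})\delta_{s^{*}}$, which is legitimate because $tt^{*}\in E$ and the computation in the proof of Remark~\ref{m1} shows $\delta_{tt^{*}s^{*}}+J=\delta_{s^{*}}+J$. For (b) I first observe that $D(\delta_{e})=0$: since $\delta_{e}*\delta_{e}=\delta_{e}$, the derivation rule combined with Remark~\ref{m1}(ii) forces $D(\delta_{e})=2D(\delta_{e})$. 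Combined with $\delta_{e}\Phi\delta_{e}=\Phi$, this yields $T_{e}=\operatorname{id}_{\X}$, and hence
\[
T_{s}\circ T_{s^{*}}=T_{ss^{*}}=\operatorname{id}_{\X}=T_{s^{*}s}=T_{s^{*}}\circ T_{s},
\]
so each $T_{s}$ is a bijection with inverse $T_{s^{*}}$. This is the step that recovers just enough ``group-like'' symmetry to make a fixed-point argument feasible despite $S$ lacking genuine inverses.

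For the bounded invariant set demanded by Lemma~\ref{fix} I take $B:=\{T_{s}(0):s\in S\}=\{-D(\delta_{s})\delta_{s^{*}}:s\in S\}$, which is bounded by $\|D\|$. The inclusion $T_{r}(B)\subseteq B$ is immediate from $T_{r}\circ T_{s}=T_{rs}$; conversely, for any $T_{s}(0)\in B$ the element $T_{r^{*}s}(0)$ lies in $B$ and
\[
T_{r}(T_{r^{*}s}(0))=T_{rr^{*}s}(0)=T_{rr^{*}}(T_{s}(0))=T_{s}(0),
\]
because $rr^{*}\in E$ and $T_{rr^{*}}=\operatorname{id}$. Hence $B=T_{r}(B)\subseteq\overline{T_{r}(B)}$ for every $r\in S$. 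The remaining hypotheses are routine: $\ell^{1}(S)/J\cong\ell^{1}(G_{S})$ is $L$-embedded, so is its $2n$-th dual $\X$; and $WAP(S)$ admits a LIM because $S$ is inverse \cite{dun}. Lemma~\ref{fix} then supplies the desired fixed point $\Phi\in\X$.

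The main obstacle I anticipate is the combined verification of (a) and (b)---specifically the cancellation $D(\delta_{s})\delta_{tt^{*}s^{*}}=D(\delta_{s})\delta_{s^{*}}$ and the identity $D(\delta_{e})=0$ for idempotents. Both rest on the twin features of the setup, namely that idempotent factors are absorbed modulo $J$ and that $ss^{*},s^{*}s\in E$; together these are precisely what allow the group-theoretic argument of \cite{zh} to be transferred to the inverse-semigroup setting.
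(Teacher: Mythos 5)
Your proposal is essentially the paper's own argument: the same cocycle $s\mapsto D(\delta_{s})\delta_{s^{*}}$ (up to a sign), the same affine nonexpansive representation $T_{s}$, the same invariant orbit set, the same verification that $T_{s}$ is invertible with inverse $T_{s^{*}}$ via $D(\delta_{e})=0$ and Remark~\ref{m1}(ii), and the same appeal to $\ell^{1}(S)/J\cong\ell^{1}(G_{S})$ being $L$-embedded together with Lemma~\ref{fix}. All of your structural verifications, including the cancellation $D(\delta_{s})\delta_{tt^{*}s^{*}}=D(\delta_{s})\delta_{s^{*}}$, are correct.

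The one place you cut a corner is the final sentence: ``linearity and continuity then deliver $D=D_{\Phi}$.'' In this paper a module derivation is only required to be additive, not linear (the definition explicitly allows $D$ to be non-linear), so you cannot pass from $D(\delta_{s})=\delta_{s}\Phi-\Phi\delta_{s}$ to agreement on finitely supported functions by linearity alone. The paper closes this gap using the module structure: $D(\lambda\delta_{s})=D(\lambda\delta_{ss^{*}}.\delta_{s})=\lambda\delta_{ss^{*}}.D(\delta_{s})=\lambda(\delta_{s}\Phi-\Phi\delta_{s})$, after which additivity, density and continuity finish the argument. With that one extra step inserted, your proof is complete and coincides with the paper's.
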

\begin{proof}
Let $D:\ell^{1}(S)\rightarrow (\ell^{1}(S)/J) ^{(2n)}$ be a
continuous module derivation. Since $ss^{*}\in E$ for all $s\in
S$, from Remark~\ref{m1}(ii), we have
\begin{equation*}
\begin{split}
D(\delta_{ss^{*}})&=D(\delta_{ss^{*}ss^{*}})=D(\delta_{ss^{*}}*\delta_{ss^{*}})\\&
=\delta_{ss^{*}}D(\delta_{ss^{*}})+D(\delta_{ss^{*}})\delta_{ss^{*}}\\&
=2D(\delta_{ss^{*}}).
\end{split}
\end{equation*}
Hence $D(\delta_{ss^{*}})=0$ for all $s\in S$. Define
$\phi:S\rightarrow (\ell^{1}(S)/J) ^{(2n)}$ by
\[ \phi(s)=D(\delta_{s})\delta_{s^{*}} \quad (s\in S). \]
We see that
\begin{equation}\label{e1}
\begin{split}
\phi(st)&=D(\delta_{s}*\delta_{t})\delta_{(st)^{*}}\\&
=(\delta_{s}D(\delta_{t}))\delta_{t^{*}}*\delta_{s^{*}}+(D(\delta_{s})\delta_{t})\delta_{t^{*}}*\delta_{s^{*}}\\&
=\delta_{s}(D(\delta_{t})\delta_{t^{*}})\delta_{s^{*}}+(D(\delta_{s})\delta_{tt^{*}})\delta_{s^{*}}\\&
=\delta_{s}(D(\delta_{t})\delta_{t^{*}})\delta_{s^{*}}+D(\delta_{s})\delta_{s^{*}}\\&
=\delta_{s}\phi(t)\delta_{s^{*}}+\phi(s),
\end{split}
\end{equation}
for all $s,t\in S$. Let $B=\phi(S)$. Then $B$ is a nonempty
bounded subset of $(\ell^{1}(S)/J) ^{(2n)}$. For any $s\in S$
define the mapping $T_{s}:(\ell^{1}(S)/J) ^{(2n)}\rightarrow
(\ell^{1}(S)/J) ^{(2n)}$ by
\[ T_{s}(\Phi)=\delta_{s}\Phi \delta_{s^{*}}+\phi(s) \quad
(\Phi\in (\ell^{1}(S)/J) ^{(2n)}).\] Clearly each $T_{s}$ $(s\in
S)$ is an affine mapping and for every $\Phi, \Psi \in
(\ell^{1}(S)/J) ^{(2n)}$ and $s\in S$ we have
\[
\parallel T_{s}(\Phi)-T_{s}(\Psi)\parallel=\parallel\delta_{s}\Phi
\delta_{s^{*}}+\phi(s)- \delta_{s}\Psi
\delta_{s^{*}}+\phi(s)\parallel  \leq \parallel \Phi - \Psi
\parallel .
\]
So each $T_{s}$ $(s\in S)$ is nonexpansive. Now by using
\eqref{e1} for any $s,t\in S$ and $\Phi, \Psi \in (\ell^{1}(S)/J)
^{(2n)}$ we find
\begin{equation*}
\begin{split}
T_{st}(\Phi)&=\delta_{st}\Phi \delta_{(st)^{*}}+\phi(st)\\&
=\delta_{s}(\delta_{t}\Phi\delta_{t^{*}})\delta_{s^{*}}+\delta_{s}\phi(t)\delta_{s^{*}}+\phi(s)\\&
=\delta_{s}T_{t}(\Phi)\delta_{s^{*}}+\phi(s)\\&
=T_{s}(T_{t}(\Phi)).
\end{split}
\end{equation*}
So $\Gamma=\{ T_{s}\, | \, s\in S\}$ defines a representation of
$S$ on $(\ell^{1}(S)/J) ^{(2n)}$ which is nonexpansive and affine.
From definition of $T_{s}$ and \eqref{e1}, for any $s,t\in S$ it
follows that $T_{s}(\phi(t))=\delta_{s}\phi(t)
\delta_{s^{*}}+\phi(s)=\phi(st)$. Therefore $T_{s}(B)\subseteq B$
$(s\in S)$. Let $\Phi \in B$. Now by Remark~\ref{m1}(ii) and the
fact that $D(\delta_{ss^{*}})=0$ $(s\in S)$, we have
\[
T_{s}(T_{s^{*}}(\Phi))=T_{ss^{*}}(\Phi)=\delta_{ss^{*}}\Phi\delta_{ss^{*}}+\phi(ss^{*})=\Phi
\quad (s\in S).\] Since $T_{s^{*}}(\Phi)\in B$, it follows that
$T_{s}(B)=B$ for each $s\in S$. Here $S$ is regarded as a discrete
semigroup.
\par
Since $\ell^{1}(S)/J\cong \ell^{1}(G_{S})$, where $G_{S}$ is the
maximal group homomorphic image, it follows that $(\ell^{1}(S)/J)
^{(2n)}$ is $L$-embedded. Also $WAP(S)$ has a $LIM$. So by
Lemma~\ref{fix}, there is $\Upsilon \in (\ell^{1}(S)/J) ^{(2n)}$
such that $T_{s}(\Upsilon)=\Upsilon$ for all $s\in S$, or
\[\delta_{s}\Upsilon \delta_{s^{*}}+\phi(s)=\Upsilon, \]
for all $s\in S$. So $\delta_{s}\Upsilon
\delta_{s^{*}}+D(\delta_{s})\delta_{s^{*}}=\Upsilon$ $(s\in S)$.
Hence
\begin{equation*}
D(\delta_{s})=\Upsilon\delta_{s}-\delta_{s}\Upsilon,
\end{equation*}
for all $s\in S$. By definition of left module action of
$\ell^{1}(E)$ on $\ell^{1}(S)$, we have
$\delta_{e}.\delta_{s}=\delta_{s}$ $(e\in E, s\in S)$. Since
$lin\{\delta_{s}\, | \, s\in S \}$ is dense in $\ell^{1}(S)$, we
find $\delta_{e}.f=f$ for all $e\in E$ and $f\in \ell^{1}(S)$.
Hence $\delta_{e}.(f+J)=f+J$ $(e\in E, f\in \ell^{1}(S))$.
Furthermore a routine inductive argument shows that for each
$e\in E$ and $\Phi \in (\ell^{1}(S)/J) ^{(2n)}$ $(n\geq 0)$, we
have $\delta_{e}.\Phi=\Phi$. From this result and the fact that
$D$ is a module mapping, for any $s \in S$ and $\lambda\in
\mathbb{C}$ we have
\begin{equation*}
\begin{split}
D(\lambda \delta_{s})&=D(\lambda \delta_{ss^{*}}.\delta_{s})\\&
=\lambda\delta_{ss^{*}}.D(\delta_{s})\\&
=\lambda\delta_{ss^{*}}.(\Upsilon\delta_{s}-\delta_{s}\Upsilon)\\&
=\lambda\delta_{ss^{*}}.(\Upsilon
\delta_{s})-\lambda(\delta_{ss^{*}}.\delta_{s})\Upsilon\\&
=\lambda(\Upsilon \delta_{s}-\delta_{s}\Upsilon).
\end{split}
\end{equation*}
Since $D$ is additive, we get $D( f)=\Upsilon f-f\Upsilon
$ for any $f \in \ell^{1}(S)$ of finite support. But $D$ is
continuous and functions of finite support are dense in
$\ell^{1}(S)$, hence
\[ D( f) =\Upsilon f-f\Upsilon=D_{(-\Upsilon)}(f) \quad (f\in
\ell^{1}(S)),\] therefore $D$ is inner. The proof is complete.
\end{proof}
In \cite{bod}, it has been proved that $\ell^{1}(S)$ is
$(2n+1)$-weakly module amenable as an $\ell^{1}(E)$-module, for
each $n\in \mathbb{N}$, where $S$ is an inverse semigroup with
the set of idempotents $E$. From this result and above theorem we
get the next corollary.
\begin{cor}
Let $S$ be an inverse semigroup with the set of idempotents $E$.
Consider $\ell^{1}(S)$ as a Banach module over $\ell^{1}(E)$ with
the trivial left action and natural right action. Then the
semigroup algebra $\ell^{1}(S)$ is permanently weakly module
amenable as an $\ell^{1}(E)$-module.
\end{cor}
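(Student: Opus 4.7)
The plan is to reduce the problem to an application of the common fixed point theorem (Lemma~\ref{fix}) by producing an affine nonexpansive action of $S$ on $(\ell^{1}(S)/J)^{(2n)}$ whose fixed points encode the ``innerness'' of a given module derivation $D:\ell^{1}(S)\to (\ell^{1}(S)/J)^{(2n)}$. The first step is to handle idempotents: because $\delta_{ss^{*}}*\delta_{ss^{*}}=\delta_{ss^{*}}$ and $\delta_{ss^{*}}\Phi=\Phi=\Phi\delta_{ss^{*}}$ by Remark~\ref{m1}(ii), applying the derivation identity to $\delta_{ss^{*}}*\delta_{ss^{*}}$ forces $D(\delta_{ss^{*}})=0$ for every $s\in S$.

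Next I would define $\phi:S\to (\ell^{1}(S)/J)^{(2n)}$ by $\phi(s)=D(\delta_{s})\delta_{s^{*}}$ and verify the cocycle-type identity
\begin{equation*}
\phi(st)=\delta_{s}\phi(t)\delta_{s^{*}}+\phi(s),
\end{equation*}
using the derivation property on $\delta_{s}*\delta_{t}$ and the identity $\delta_{tt^{*}}\Phi=\Phi$. Motivated by this, I set $T_{s}(\Phi)=\delta_{s}\Phi\delta_{s^{*}}+\phi(s)$. A direct check then shows each $T_{s}$ is affine, that $\|T_{s}(\Phi)-T_{s}(\Psi)\|\le \|\Phi-\Psi\|$ (so nonexpansive), and that $T_{st}=T_{s}\circ T_{t}$, so $\Gamma=\{T_{s}\mid s\in S\}$ is a representation of $S$ on $(\ell^{1}(S)/J)^{(2n)}$ by affine nonexpansive maps.

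The most delicate point is verifying the hypothesis $B\subseteq \overline{T_{s}(B)}$ of Lemma~\ref{fix} for a suitable bounded $B$. I take $B=\phi(S)$, which is bounded because $D$ is bounded. The identity $T_{s}(\phi(t))=\phi(st)$ gives $T_{s}(B)\subseteq B$; to get the reverse inclusion I would exploit the inverse structure: for $\Phi=\phi(t)\in B$, $T_{s}T_{s^{*}}(\Phi)=T_{ss^{*}}(\Phi)$, and because $ss^{*}\in E$ and $D(\delta_{ss^{*}})=0$ together with Remark~\ref{m1}(ii), this equals $\Phi$. Hence $B=T_{s}(B)$, and I expect this to be the main technical hurdle in identifying why inverse semigroups (rather than arbitrary semigroups) are the right setting.

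Having arranged the hypotheses, I invoke Lemma~\ref{fix}: $(\ell^{1}(S)/J)^{(2n)}\cong \ell^{1}(G_{S})^{(2n)}$ is $L$-embedded, and $WAP(S)$ admits a $LIM$ since $S$ is an inverse semigroup. This yields a common fixed point $\Upsilon$, i.e.\ $\delta_{s}\Upsilon\delta_{s^{*}}+D(\delta_{s})\delta_{s^{*}}=\Upsilon$ for every $s\in S$. Right-multiplying by $\delta_{s}$ and using $\delta_{s^{*}s}\Phi=\Phi$, I conclude $D(\delta_{s})=\Upsilon\delta_{s}-\delta_{s}\Upsilon$. To finish, I extend from point masses to all of $\ell^{1}(S)$: using that $D$ commutes with the trivial left $\ell^{1}(E)$-action (so $D(\lambda\delta_{s})=\lambda\delta_{ss^{*}}.D(\delta_{s})=\lambda(\Upsilon\delta_{s}-\delta_{s}\Upsilon)$), additivity, and norm continuity together with the density of finitely supported functions in $\ell^{1}(S)$, give $D(f)=\Upsilon f-f\Upsilon$ throughout, so $D=D_{-\Upsilon}$ is inner.
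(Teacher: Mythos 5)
Your argument is correct as far as it goes, and it reproduces essentially verbatim the paper's proof of the main theorem: the vanishing of $D$ on $\delta_{ss^{*}}$, the cocycle map $\phi(s)=D(\delta_{s})\delta_{s^{*}}$, the affine nonexpansive representation $T_{s}(\Phi)=\delta_{s}\Phi\delta_{s^{*}}+\phi(s)$ with $T_{s}(B)=B$ for $B=\phi(S)$, the appeal to Lemma~\ref{fix}, and the passage from point masses to all of $\ell^{1}(S)$. But what it proves is only that $\ell^{1}(S)$ is $2n$-weakly module amenable for each $n$. The statement you were asked to prove is \emph{permanent} weak module amenability, which by definition requires $n$-weak module amenability for \emph{every} $n\in\mathbb{N}$, odd as well as even. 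Your proposal never addresses the odd duals $(\ell^{1}(S)/J)^{(2n+1)}$, so it establishes the theorem of the paper rather than the corollary.

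This omission is not merely cosmetic, because your method cannot be pushed to the odd case as written: the whole point of restricting to even duals is that $(\ell^{1}(S)/J)^{(2n)}\cong\ell^{1}(G_{S})^{(2n)}$ is $L$-embedded, which is the hypothesis Lemma~\ref{fix} needs. The odd duals are even duals of $\ell^{\infty}(G_{S})$, and these are not $L$-embedded in general, so the common fixed point theorem does not apply there. The paper closes the gap by quoting the result of \cite{bod} that $\ell^{1}(S)$ is $(2n+1)$-weakly module amenable as an $\ell^{1}(E)$-module; that case is handled by an entirely different (and more classical, Johnson/Dales--Ghahramani--Gr\o nb\ae k style) argument for odd duals. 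To complete your proof you must either cite that result or supply an independent argument for the odd-numbered duals.
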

With the notations in previous corollary, we have the next result.
\begin{cor}
Each continuous module derivation $D:\ell^{1}(S)\rightarrow
(\ell^{1}(G_{S})) ^{(n)}$ $(n\in\mathbb{N})$ is inner.
\end{cor}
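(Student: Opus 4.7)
The plan is to adapt the fixed-point argument of \cite{zh} to the module setting, using Lemma~\ref{fix} together with the structural facts already recorded: $(\ell^{1}(S)/J)^{(2n)}$ is $L$-embedded (because $\ell^{1}(S)/J\cong \ell^{1}(G_{S})$ and even duals of $L^{1}$-spaces of locally compact groups are $L$-embedded), and $WAP(S)$ admits a $LIM$ for any inverse semigroup $S$. Given a continuous module derivation $D:\ell^{1}(S)\rightarrow (\ell^{1}(S)/J)^{(2n)}$, the first observation I would make is that for every idempotent $ss^{*}$ the Leibniz rule together with Remark~\ref{m1}(ii) forces $D(\delta_{ss^{*}})=2D(\delta_{ss^{*}})$, hence $D$ vanishes on all $\delta_{ss^{*}}$.

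Next I would introduce the ``trivialised'' map $\phi:S\rightarrow (\ell^{1}(S)/J)^{(2n)}$ defined by $\phi(s)=D(\delta_{s})\delta_{s^{*}}$. Using the derivation identity and absorbing $\delta_{tt^{*}}$ by Remark~\ref{m1}(ii), a direct calculation should give the cocycle-type formula $\phi(st)=\delta_{s}\phi(t)\delta_{s^{*}}+\phi(s)$. This relation suggests defining, for each $s\in S$, the affine mapping $T_{s}(\Phi)=\delta_{s}\Phi\delta_{s^{*}}+\phi(s)$ on $(\ell^{1}(S)/J)^{(2n)}$. The cocycle identity translates exactly into $T_{st}=T_{s}\circ T_{t}$, producing a representation $\Gamma=\{T_{s}\}$ of $S$, and since $\|\delta_{s}\|=\|\delta_{s^{*}}\|=1$ each $T_{s}$ is nonexpansive. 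Setting $B=\phi(S)$, boundedness of $D$ makes $B$ bounded, and the identities $T_{s}\circ T_{s^{*}}=T_{ss^{*}}=\mathrm{id}$ (which use $\phi(ss^{*})=0$) imply $T_{s}(B)=B$ for every $s\in S$, which is exactly the hypothesis needed to apply Lemma~\ref{fix}.

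The fixed point theorem then yields $\Upsilon\in (\ell^{1}(S)/J)^{(2n)}$ with $\delta_{s}\Upsilon\delta_{s^{*}}+D(\delta_{s})\delta_{s^{*}}=\Upsilon$ for all $s\in S$. Multiplying on the right by $\delta_{s}$ and again invoking Remark~\ref{m1}(ii) (which makes $\delta_{s^{*}}\delta_{s}=\delta_{s^{*}s}$ act trivially under the bimodule actions on $(\ell^{1}(S)/J)^{(2n)}$) gives $D(\delta_{s})=\Upsilon\delta_{s}-\delta_{s}\Upsilon$ on generators. To upgrade this to an identity for a module derivation (which is only required to be additive and $\ell^{1}(E)$-equivariant, not linear), I would use $\delta_{e}.f=f$ for $e\in E$ together with the module property $D(\lambda\delta_{ss^{*}}.\delta_{s})=\lambda\delta_{ss^{*}}.D(\delta_{s})$ to absorb scalars; additivity then extends the formula to all finitely supported functions, and continuity of $D$ together with density finishes the proof.

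The main obstacle I anticipate is the careful bookkeeping in two places: first, verifying the cocycle identity for $\phi$ (the right-absorption of $\delta_{tt^{*}}$ has to be justified by pushing through Remark~\ref{m1}(ii) on the $\ell^{1}(S)$-action of $(\ell^{1}(S)/J)^{(2n)}$); and second, promoting the equality $D(\delta_{s})=\Upsilon\delta_{s}-\delta_{s}\Upsilon$ from $\mathbb{C}$-multiples of generators to all of $\ell^{1}(S)$, since module derivations are not automatically $\mathbb{C}$-linear and complex scalars must be reintroduced through the trivial left $E$-action rather than assumed.
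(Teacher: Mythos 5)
There is a genuine gap: the corollary asserts innerness of module derivations into $(\ell^{1}(G_{S}))^{(n)}$ for \emph{every} $n\in\mathbb{N}$, odd as well as even, but your argument only covers the even duals. Everything you write is a faithful reconstruction of the paper's proof of the main theorem (vanishing of $D$ on $\delta_{ss^{*}}$, the cocycle map $\phi(s)=D(\delta_{s})\delta_{s^{*}}$, the representation $T_{s}(\Phi)=\delta_{s}\Phi\delta_{s^{*}}+\phi(s)$, the surjectivity $T_{s}(B)=B$ via $T_{ss^{*}}=\mathrm{id}$, the application of Lemma~\ref{fix}, and the scalar-absorption trick through the trivial left $E$-action), and for targets $(\ell^{1}(S)/J)^{(2n)}\cong(\ell^{1}(G_{S}))^{(2n)}$ it is correct. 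But the fixed-point lemma requires the target to be $L$-embedded, and this is exactly where the parity restriction enters: the odd duals $(\ell^{1}(G_{S}))^{(2n-1)}$, beginning with $\ell^{\infty}(G_{S})$, are not $L$-embedded in general, so Lemma~\ref{fix} simply does not apply to them and your construction of a common fixed point breaks down at the first step of its application.

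The paper closes the odd case by a different route: it invokes the result of \cite{bod} that $\ell^{1}(S)$ is $(2n+1)$-weakly module amenable as an $\ell^{1}(E)$-module, and then uses the identification $\ell^{1}(S)/J\cong\ell^{1}(G_{S})$ (which you do mention, but only to justify $L$-embeddedness of the even duals) to translate both the odd result of \cite{bod} and the even result of the main theorem into the single statement about derivations into $(\ell^{1}(G_{S}))^{(n)}$. To repair your proof you must either cite that external $(2n+1)$-result or supply an independent argument for the odd duals; the fixed-point method alone cannot deliver the full corollary.
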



\bibliographystyle{amsplain}

\end{document}